\documentclass{amsart}

\usepackage{amsmath,amscd,amssymb,amsthm}
\usepackage{amsfonts}
\usepackage{geometry}
\usepackage[
  colorlinks,
  linkcolor=blue,
  breaklinks=true,
  urlcolor=blue,
  citecolor=blue,
  anchorcolor=blue,
  pagebackref
]{hyperref}

\newtheorem{theorem}{Theorem}[section]
\newtheorem{lemma}{Lemma}[section]
\newtheorem{remark}{Remark}
\newtheorem*{example}{Example}
\newtheorem{conjecture}{Conjecture}

\newcommand{\R}{\mathbb R}
\newcommand{\C}{\mathbb C}
\newcommand{\ii}{\sqrt{-1}}
\newcommand{\CP}{\mathbb{CP}}
\newcommand{\Fm}{\mathcal F_m}
\newcommand{\Ric}{\operatorname{Ric}}
\newcommand{\SU}{\operatorname{SU}}
\newcommand{\SL}{\operatorname{SL}}
\newcommand{\Hom}{\operatorname{Hom}}
\newcommand{\mix}{\mathcal C_{\alpha,\beta}}
\newcommand{\dd}{\,d\sigma}
\newcommand{\doi}[1]{%
  \href{https://doi.org/#1}{doi:\nolinkurl{#1}}%
}

\renewcommand*\backref[1]{}
\renewcommand*\backrefalt[4]{%
  \ifcase #1%
  \or\space(cited on page #2)%
  \else\space(cited on pages #2)%
  \fi
}

\hypersetup{
  pdftitle={
    Mixed Chern curvature on complete flags
  },
  pdfauthor={Yuqin Guo and Dongmei Zhang}
}

\begin{document}

\title[Mixed Chern curvature on complete flags]
{Mixed Chern curvature on complete flags}

\author{Yuqin Guo}
\address{
Yuqin Guo.
School of Mathematical Sciences,
Chongqing Normal University,
Chongqing 401331, China
}
\email{1942747285@qq.com}

\author{Dongmei Zhang}
\address{Dongmei Zhang. School of Mathematical Sciences, Chongqing Normal University, Chongqing 401331, China}
\email{{2250825921@qq.com}}

%
%
%

\date{}

\subjclass[2020]{ 53C55 }

\keywords{Hermitian metric; mixed curvature; Chern connection; complete flag manifold; holomorphic root sphere; balanced metric
}

\begin{abstract}
We prove that the complete flag manifold $\SU(m)/T^{m-1}$, with $m\geq3$ and its standard complex structure, admits no invariant Hermitian metric with constant mixed Chern curvature when the coefficient of the holomorphic sectional curvature is nonzero. This extends the preceding nonexistence result for a particular Wallach metric on $\SU(3)/T^2$ to all invariant
Hermitian metrics and to complete type-A flags of arbitrary dimension. Curvature evaluations on holomorphic root spheres contradict a necessary trace identity, covering both K\"ahler and non-K\"ahler invariant metrics and excluding both zero and nonzero curvature constants.
\end{abstract}

\maketitle

\markleft{Yuqin Guo and Dongmei Zhang}
\markright{Mixed Chern curvature on complete flags}

\section{Introduction and statement of result}

Complex geometry harbours a long--standing open problem concerning classification of compact Hermitian manifolds endowed with constant holomorphic sectional curvature.
In the K\"ahler case, it is a well established fact that every compact (or complete) K\"ahler manifold of constant holomorphic sectional curvature belongs to the family of complex space forms.
In other words, their universal covers are one of the standard models: complex projective space$\,\mathbb{CP}^n$, complex Euclidean space $\mathbb{C}^n$, or complex hyperbolic space $\mathbb{CH}^n$.

For non-K\"ahler metrics $g$, the curvature tensor $R$ fails to satisfy the symmetry identities valid in the K\"ahler framework. Consequently, the holomorphic sectional curvature $H$ alone cannot recover full information encoded within $R$.
Nevertheless, the following statement remains widely conjectured to hold for compact manifolds.

\begin{conjecture}[
Constant Chern holomorphic sectional curvature;
original formulation
]\label{conj:hsc}
Let $(M^n,J,g)$ be a compact Hermitian manifold with $n\geq2$ and $H\equiv c$. If $c\neq0$, then $g$ is K\"ahler; if $c=0$, then $g$ is Chern flat, namely $R\equiv0$.
\end{conjecture}

Note that the compactness assumption is necessary for the validity of the conjecture, without it there are counterexamples in the noncompact case. Compact Chern flat manifolds have been classified by Boothby \cite{Boothby} as all the compact quotients of complex Lie groups equipped with left invariant metrics. Moreover, compact Levi-Civita flat Hermitian threefolds have been classified by Khan-Yang-Zheng in \cite{KYZ}, but for $n\geq4$, the classification remains an open question.

In complex dimension $n=2$ , combined contributions from Balas-Gauduchon \cite{Balas, BG} (1985), Sato-Sekigawa \cite{SS} (1990), and Apostolov-Davidov-Muskarov \cite{ADM} (1996) have confirmed Conjecture \ref{conj:hsc}.
For $n\ge 3$ , the first substantive result towards this conjecture originates from Davidov-Grantcharov-Muskarov \cite{DGM}. Their work shows that $\mathbb{CP}^3$ is the unique twistor space admitting constant holomorphic sectional curvature.
More recently, Chen-Chen-Nie \cite{CCN} verified Conjecture \ref{conj:hsc} for locally conformally K\"ahler manifolds satisfying $c\leq 0$.
Tang \cite{TangKL} validated the conjecture under an extra hypothesis that the metric is Chern-K\"ahler-like, meaning its Chern curvature $R^c$ obeys all K\"ahler symmetry rules.
Zhou-Zheng \cite{ZhouZ} obtained another confirmation for compact balanced threefolds with vanishing real bisectional curvature a condition introduced in \cite{YangZ}, which is marginally stronger than the condition encoded by $ H^c $. For Hermitian manifolds, Tang \cite{tang-2026} has given a complete solution to the problem of constant real bisectional curvature.
Their argument yields Chern-flatness for such manifolds. Further affirmative instances for $\nabla^c$ were obtained by Li-Zheng \cite{LZ} and Rao-Zheng \cite{RZ}, working under additional assumptions that either $(M^n,g)$ is a complex nilmanifold or g satisfies the Bismut-K\"ahler-like condition, so that the Bismut curvature $R^b$ satisfies all K\"ahler symmetries.

Recent preprints settle the unrestricted threefold case: Qin--Tian \cite{QT} established nonzero-constant rigidity, and
Chen--Zheng \cite{CZthree} established Chern flatness for $c=0$, removing the balanced assumption in Chen--Li \cite{CL}.
Qin--Tian's version~3 also proves K\"ahler rigidity for $c\leq0$ in Fujiki's class $\mathcal C$ and reports compact balanced metrics with $H_g\equiv0$ but $R\not\equiv0$ for every $n\geq7$. If confirmed, these reported counterexamples disprove the unrestricted zero-curvature assertion in those dimensions.

Geometers have also achieved abundant results on the mixed-curvature problem. Mixed curvature was introduced by Chu--Lee--Tam in the K\"ahler setting \cite{CLT}. Its special case $\alpha=0$ leads to the classical Chern holomorphic sectional curvature problem; see \cite{Zsurvey} for a survey. In the above notation, we have $\mathcal{C}_{1,0} = \operatorname{Ric}$ and $\mathcal{C}_{0,1} = H$. When the metric $g$ is K\"ahler, the mixed curvature encompasses several interesting curvature conditions.
Specifically, $\mathcal{C}_{1,1}$ corresponds to the $\operatorname{Ric}^+(X, \overline{X})$ introduced by Ni \cite{ni-2004}, and $\mathcal{C}_{1,-1}$ corresponds to the orthogonal Ricci curvature $\operatorname{Ric}^\perp(X, \overline{X})$ introduced by Ni-Zheng \cite{ni-zheng-2008}.
Additionally, if $k$ is an integer between $1$ and $n$, then $\mathcal{C}_{k-1,n-k}$ is closely related to the $k$-Ricci curvature $Ric_k$ introduced by Ni \cite{ni-2005}.

For compact K\"ahler manifolds with $\mathcal{C}_{\alpha,\beta} \geq 0$ or $\mathcal{C}_{\alpha,\beta} \leq 0$, there have been important progress made in recent years.
For example, Yang \cite{yang-2007} proved that a compact K\"ahler manifold with $\mathcal{C}_{0,1} > 0$ must be projective and rationally connected, confirming a conjecture by Yau \cite{yau-1978}.
Ni \cite{ni-2004} generalized this result, showing that the same holds if $Ric_k > 0$ for some $1 \leq k \leq n$.
Matsumura \cite{matsumura-1983}, \cite{matsumura-1985} established structure theorems for projective manifolds with $\mathcal{C}_{0,1} \geq 0$, while Chu-Lee-Tam \cite{CLT} and others proved that compact K\"ahler manifolds with $\mathcal{C}_{\alpha,\beta} > 0$, where $\alpha > 0$ and $3\alpha + 2\beta \geq 0$, must be projective and simply-connected.
Zhang-Zhang \cite{zhang-zhang-2010} extended Yang's result to the quasi-positive case, confirming a conjecture by Yang.
Tang \cite{tang-2012} demonstrated the projectivity of compact K\"ahler manifolds with quasi-positive $\mathcal{C}_{\alpha,\beta}$.
More recently, Chu-Lee-Zhu \cite{chu-lee-zhu-2018} established a structure theorem for compact K\"ahler manifolds with $\mathcal{C}_{\alpha,\beta} \geq 0$.
In the non-positive case, Wu-Yau \cite{wu-yau-2014} confirmed a conjecture of Yau that projective K\"ahler manifolds with $\mathcal{C}_{0,1} < 0$ must have ample canonical line bundles.
Tosatti-Yang \cite{tosatti-yang-2013} showed that the projectivity assumption can be removed in Wu-Yau's Theorem, and Chu-Lee-Tam \cite{CLT} proved that compact K\"ahler manifolds with $\mathcal{C}_{\alpha,\beta} < 0$ and $\alpha \geq 0$, $\beta \geq 0$, must have ample canonical bundles.

Most existing results for mixed-curvature invariants focus predominantly on the K\"ahler setting; comparatively little is understood for non-K\"ahler geometries. In \cite{Tang}, Tang formulated the following conjecture:

\begin{samepage}
\begin{conjecture}[Tang's conjecture]\label{conj:mixed}
Let $(M^n,J,g)$ be a compact Hermitian manifold with $\mix\equiv c$ and $\beta\neq0$. If $c\neq0$, then $g$ is K\"ahler.
\end{conjecture}
\end{samepage}

Chen--Zheng \cite{CZ} verified this conjecture for several Lie--Hermitian classes and compact balanced BTP threefolds.
Chen--Tang \cite{CT} studied other Chern--Ricci contractions, and Tang--Wang \cite{TW} proved nonzero-constant rigidity for compact LCK metrics with constant $k$th-mixed curvature. The unrestricted conjecture remains open; zero mixed curvature requires separate treatment. Our theorem excludes both zero and nonzero constants on complete flags independently of either conjecture.

Compact complex homogeneous spaces constitute a natural framework for studying the constant mixed-curvature condition. Their invariant Hermitian metrics are characterized via isotropy representations, and their first Chern class may be computed using homogeneous line bundles.

Even so, mere transitivity does not guarantee that the mixed curvature is independent of tangent directions. On a complete flag manifold, the distinct root directions yield a finite set of geometrically distinguished directions, against which one may verify the constant-curvature condition. We consider the complete type-A flag manifold
\begin{equation}\label{eq:flag-intro}
 \Fm=\SU(m)/T^{m-1},
 \qquad
 n=\dim_{\C}\Fm=\frac{m(m-1)}2,
\end{equation}
where $T^{m-1}$ is the diagonal maximal torus. We equip $\Fm$ with the standard integrable complex structure obtained from the space of complete flags in $\C^m$. Our main result is the following.

\begin{theorem}\label{thm:main}
Let $m\geq3$, and let $g$ be an $\SU(m)$-invariant Hermitian metric on $\Fm$ with its standard invariant complex structure. There are no real numbers $\alpha,\beta,c$ with $\beta\neq0$ such that
\[
 \mix(X)=c
 \qquad
 \text{for every nonzero }X\in T^{1,0}\Fm.
\]
\end{theorem}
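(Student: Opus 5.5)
The plan is to reduce the condition $\mix\equiv c$ to finitely many algebraic identities in the parameters of the invariant metric, and then show these identities are incompatible. We use the convention $\mix(X)=\alpha\,\Ric(X,\bar X)/|X|^2+\beta H(X)$, where $\Ric$ is the first Chern Ricci form and $H(X)=R(X,\bar X,X,\bar X)/|X|^4$.

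\textbf{Step 1: parametrize the metrics.} The isotropy representation of $T^{m-1}$ on $T_o\Fm$ splits into the root spaces $\mathfrak g_{\alpha_{ij}}$, $i<j$. These are pairwise inequivalent one-dimensional summands. Hence every $\SU(m)$-invariant Hermitian metric is diagonal in the root vectors $E_{ij}$, with $g(E_{ij},\bar E_{ij})=x_{ij}>0$, and there are exactly $n$ parameters. Homogeneity lets us work at the base point $o$ only.

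\textbf{Step 2: write $\mix\equiv c$ as a polynomial identity.} The condition becomes
\[
 \alpha\,\Ric(X,\bar X)\,|X|^2+\beta\,R(X,\bar X,X,\bar X)=c\,|X|^4
\]
for all $X=\sum z_{ij}E_{ij}$, as real polynomials in $z$.

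\textbf{Step 3: the Ricci term.} The first Chern Ricci form is the curvature of $K^{-1}$ with the induced invariant volume. It is therefore metric-independent up to an exact invariant term, which vanishes on $\Fm$, since there are no nonzero invariant $1$-forms because the isotropy has no trivial summand. Consequently $\Ric(E_{ij},\bar E_{ij})=\kappa\,\langle 2\rho,\alpha_{ij}\rangle=2\kappa(j-i)$ for a universal normalization constant $\kappa>0$, and $\Ric$ is diagonal.

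\textbf{Step 4: the curvature coefficients.} For the $\beta$-term I would compute the Chern connection of the invariant Hermitian metric via the standard formula on $G/T$. At $o$, $\nabla_{E_a}E_b$ equals the $(1,0)$-projection of $[E_a,E_b]$ reweighted by the ratios $x_{a+b}/x_b$, with a similar formula for $\nabla_{\bar E_a}E_b$. From this one extracts the coefficients $R_{a\bar a a\bar a}$ and $R_{a\bar a b\bar b}$, and the mixed terms $R_{a\bar b c\bar d}$ with $a+c=b+d$ as roots. The evaluation on each holomorphic root sphere $\SU(2)_{ij}\cdot o\cong\CP^1$ gives
\[
 H(E_{ij})=\frac{\lambda_{ij}(x)}{x_{ij}^2}
\]
for an explicit rational function $\lambda_{ij}$. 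Matching the coefficient of $|z_{ij}|^4$ in Step 2 then yields, for every positive root,
\[
 \frac{2\alpha\kappa(j-i)}{x_{ij}}+\beta\,\frac{\lambda_{ij}(x)}{x_{ij}^2}=c .
\]

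\textbf{Step 5: cross terms and the trace identity.} Next I would match the coefficients of $|z_a|^2|z_b|^2$, and of the non-diagonal monomials $z_a\bar z_b z_c\bar z_d$ with $a+c=b+d$. It suffices to use the roots $\alpha_{12},\alpha_{23},\alpha_{13}$, which is where $m\ge3$ enters. Since $\beta\neq0$, the coefficient identity forces the symmetrized curvature to be a combination of $g\odot g$ and $\Ric\odot g$. Integrating over the unit sphere of $T_o$, equivalently taking the double trace, gives the necessary trace identity linking the Chern scalar curvatures $s$ and $\hat s$ with $\alpha\,\mathrm{tr}\Ric$ and $c$. Because $\Ric$ is diagonal in the $E_{ij}$, the $|z_a|^2|z_b|^2$ coefficient conditions are quite rigid. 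The monomial $z_{12}z_{23}\bar z_{13}\bar z_{ij}$-type terms, which appear only through $R$, must vanish after symmetrization. This should force relations such as $x_{13}=x_{12}+x_{23}$ (the Kähler condition) or a specific non-Kähler relation.

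\textbf{Step 6: contradiction in both cases.} Substituting these relations into the root-sphere identities of Step 4 for $\alpha_{12},\alpha_{23},\alpha_{13}$ should then give a contradiction with the trace identity. The point is that $j-i$ takes the value $2$ on $\alpha_{13}$ but $1$ on $\alpha_{12}$ and $\alpha_{23}$, so the Ricci term is not proportional to $g$ in a way compatible with the $H$-values. I would handle the two branches separately: the Kähler case, where $H$ along roots is classical, and the non-Kähler case. In each branch the argument covers $c=0$ and $c\neq0$ simultaneously, since $c$ is simply eliminated.

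\textbf{Main obstacle.} I expect the hardest step to be the explicit, sign-correct computation of the Chern curvature coefficients in Step 4 for a general non-Kähler invariant metric, especially the non-diagonal terms. My first approach would be to restrict to the $\SU(3)$-subflag spanned by the indices $i,j,k$, which is an invariant complex submanifold. One must check that the relevant curvature coefficients restrict correctly, for instance via a totally geodesic argument or a direct check of the second fundamental form terms. This would reduce the computation to $\SU(3)/T^2$ with three parameters.
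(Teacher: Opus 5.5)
Your Steps 1--3 are fine. Step 3 is in fact a neat shortcut: $\det g'/\det g$ is an invariant, hence constant, function, so all invariant metrics share one first Chern Ricci form.

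The gap is in Steps 4--6, where the decisive information is never produced. Your averaged identity is $n(n+1)c=((n+1)\alpha+\beta)s+\beta\hat s$. Comparing it with the sum of the root-direction identities of Step 4 gives
\[
 \sum_{i<j}H(e_{ij})=\frac{s+\hat s}{n+1},
\]
so you need both the values $H(e_{ij})$ (your $\lambda_{ij}$, left undetermined) and control of $\hat s$. The proposal supplies neither.

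The mechanism you offer in their place does not work. Since $\chi_{12}\chi_{23}=\chi_{13}$, the monomial $z_{12}z_{23}\bar z_{13}\bar z_{ij}$ has torus weight $\chi_{ij}^{-1}\neq1$. Its coefficient therefore vanishes for every invariant metric and gives no relation among the $x_{ij}$. For $m=3$, every product of two positive root characters comes from a unique pair, so at $o$ the quartic $R(X,\bar X,X,\bar X)$ contains only the monomials $|z_a|^2|z_b|^2$. No K\"ahler/non-K\"ahler dichotomy is forced, and Step~6 has nothing to substitute.

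The diagonal equations alone cannot help either. Once $\lambda_{ij}$ is known they read $2(\alpha(j-i)+\beta)/x_{ij}=c$, which is solvable, e.g.\ with $\alpha,\beta,c>0$ and $x_{ij}=2(\alpha(j-i)+\beta)/c$. So the contradiction must come from the $|z_a|^2|z_b|^2$ coefficients, that is, from components $R_{a\bar a b\bar b}$ and $R_{a\bar b b\bar a}$ that you have not computed. Summed, these are exactly what $\hat s$ packages.

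The paper supplies the two missing inputs without computing any connection.

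\textbf{Balancedness.} Every invariant Hermitian metric on $\Fm$ is balanced. The fact you already use in Step~3 (no nonzero $T^{m-1}$-invariant covectors at $o$) forces $\delta_g\omega=0$, i.e.\ $d\omega^{n-1}=0$. The contracted Chern--Bianchi identity $\sum_i\eta_{i,\bar i}=\hat s-s$ then gives $s=\hat s$.

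\textbf{Root spheres.} Each root sphere $C_{ij}$ has vanishing Chern second fundamental form, because $\mathrm{II}_o(v_{ij},v_{ij})$ would have weight $\chi_{ij}^2$, which is not a root character. The Gauss identity for holomorphic subbundles then reads $H(e_{ij})$ off the Fubini--Study metric on $C_{ij}$: $H(e_{ij})=2/x_{ij}$, i.e.\ $\lambda_{ij}=2x_{ij}$, for every invariant metric.

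Together with your Step~3 this yields $\sum_{i<j}(n+1-2(j-i))/x_{ij}=0$. That is impossible, since every coefficient is at least $(m-2)(m-3)/2\geq0$ and the simple roots contribute $n-1>0$. K\"ahler and non-K\"ahler metrics, and $c=0$ or $c\neq0$, are handled uniformly.

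Your reduction to $\SU(3)$-subflags is sound: the same weight argument kills their second fundamental form, and the restricted trace identity becomes $1/x_{ij}+1/x_{jk}=0$. But it needs the same two facts, and it is unnecessary.
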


The theorem concerns all invariant Hermitian metrics, rather than a single normal or canonical metric. In particular, it includes both K\"ahler and non-K\"ahler metrics, and excludes constant mixed curvature even when the proposed constant is zero. The restriction $m\geq3$ is sharp: $\mathcal F_2=\CP^1$ has constant mixed curvature for every invariant Hermitian metric. The assumption $\beta\neq0$ is also essential, since invariant K\"ahler--Einstein metrics on complete flag manifolds have constant first Ricci curvature.

There is an important preceding special case. In the proof of their Theorem~3, Chen--Zheng \cite[Section~6]{CZ} show that the particular non-K\"ahler Wallach metric on the threefold
\[
 \SU(3)/T^2\simeq\mathbb P(T\CP^2)
\]
cannot have constant mixed curvature. Theorem~\ref{thm:main} extends this nonexistence statement to every invariant Hermitian metric on that threefold and to complete type-A flags of arbitrary dimension. No parallel-torsion assumption is used. Our use of invariant flag geometry is compatible with the framework of Grama--Oliveira \cite{GO}; their scalar-curvature condition differs from the mixed-curvature condition studied here.

The proof requires only selected directional values of $R$, rather than a formula for all of its components. Every root line is tangent to a holomorphically embedded sphere. Since twice a root is not a root, the isotropy representation forces the Chern second fundamental form of this sphere to vanish. The resulting holomorphic sectional curvature and the anticanonical degree yield, respectively,
\[
 H(e_{ij})=\frac2{x_{ij}},
 \qquad
 \Ric^{(1)}(e_{ij},\bar e_{ij})
 =\frac{2(j-i)}{x_{ij}},
\]
where $e_{ij}$ is a unit root vector and $x_{ij}>0$ is the scale of the induced sphere metric. A spherical average then eliminates the parameters $\alpha$ and $c$ and leaves a sum of nonnegative terms with at least one strictly positive term.

Section~2 establishes the averaging identity and the necessary directional trace condition. Section~3 describes invariant metrics, balancedness, and holomorphic root spheres. Section~4 computes their anticanonical degrees and first Chern Ricci curvature. Section~5 proves Theorem~\ref{thm:main}, gives the explicit three-dimensional calculation, and explains the boundary cases.

\section{Averaging and the directional trace identity}
Let $(M^n,J,g)$ be a Hermitian manifold, and let $R$ denote the curvature tensor of its Chern connection. For a nonzero vector
$X\in T^{1,0}M$, write
\begin{equation}\label{eq:definition}
 H(X)
 =
 \frac{R(X,\bar X,X,\bar X)}{|X|^4},
 \qquad
 \mix(X)
 =
 \alpha
 \frac{\Ric^{(1)}(X,\bar X)}{|X|^2}
 +\beta H(X),
\end{equation}
where $\Ric^{(1)}$ is the first Chern Ricci tensor and $\alpha,\beta\in\R$. Throughout the paper, $\beta\neq0$. We use the term mixed curvature for this real linear combination without imposing a sign condition on either coefficient. The notation $\mix\equiv c$ means that the same real number $c$ is obtained at every point and in every nonzero $(1,0)$ tangent direction. In holomorphic coordinates, write the fundamental form as
\[
 \omega
 =
 \ii\sum_{i,j}g_{i\bar j}\,dz^i\wedge d\bar z^j.
\]
Our Chern curvature convention is
\begin{equation}\label{eq:curvature}
 R_{i\bar j k\bar\ell}
 =
 -\partial_i\partial_{\bar j}g_{k\bar\ell}
 +
 \sum_{r,t}g^{r\bar t}
 (\partial_i g_{k\bar t})
 (\partial_{\bar j}g_{r\bar\ell}).
\end{equation}
Here $(g^{r\bar t})$ denotes the inverse Hermitian tensor, so that
\[
 \sum_tg^{r\bar t}g_{k\bar t}=\delta^r_k.
\]
With this normalization, the metric
\[
 g_{w\bar w}=x(1+|w|^2)^{-2}
\]
on $\CP^1$ has holomorphic sectional curvature $2/x$.

Under a unitary frame $e_1,\ldots,e_n$, put
\begin{equation}\label{eq:traces}
 P_{i\bar j}
 =
 \Ric^{(1)}_{i\bar j}
 =
 \sum_kR_{i\bar j k\bar k},
 \qquad
 s=\sum_iP_{i\bar i},
 \qquad
 \widetilde s
 =
 \sum_{i,k}R_{i\bar k k\bar i}.
\end{equation}
Both $s$ and $\widetilde s$ are real. We do not identify these two contractions on an arbitrary Hermitian manifold. The first Chern Ricci form is
\begin{equation}\label{eq:ricci-form}
 \rho^{(1)}
 =
 \ii\sum_{i,j}P_{i\bar j}\,dz^i\wedge d\bar z^j
 =
 -\ii\partial\bar\partial\log\det(g_{i\bar j}),
 \qquad
 [\rho^{(1)}]=2\pi c_1(M).
\end{equation}
It is closed without a K\"ahler assumption.

For the full Chern torsion components, we use
\[
 T^k_{ij}
 =
 \sum_\ell g^{k\bar\ell}
 \bigl(
   \partial_i g_{j\bar\ell}
   -
   \partial_j g_{i\bar\ell}
 \bigr),
 \qquad
 \eta_i=\sum_kT^k_{ik}.
\]
The metric is balanced if $d\omega^{n-1}=0$, equivalently if the torsion trace form $\eta$ vanishes. With the full torsion convention above, the Chern Bianchi identity in a unitary frame reads
\[
 T^k_{ij,\bar\ell}
 =
 R_{j\bar\ell i\bar k}
 -
 R_{i\bar\ell j\bar k};
\]
see \cite{YZ}. Here a comma denotes Chern covariant differentiation. Contracting this identity gives
\begin{equation}\label{eq:torsion-trace}
 \sum_i\eta_{i,\bar i}
 =
 \sum_{i,k}
 \bigl(
   R_{k\bar i i\bar k}
   -
   R_{i\bar i k\bar k}
 \bigr)
 =
 \widetilde s-s.
\end{equation}
Consequently, every balanced Hermitian metric satisfies $s=\widetilde s$ pointwise. This conclusion requires neither compactness nor homogeneity.

We include the standard averaging argument below for the sake of clarity \cite{CT}.
\begin{lemma}\label{lem:average}
If a Hermitian manifold of complex dimension $n$ satisfies $\mathcal{C}_{\alpha,\beta}\equiv c$, then at every point
\begin{equation}\label{eq:average}
 n(n+1)c
 =
 \bigl((n+1)\alpha+\beta\bigr)s
 +
 \beta\widetilde s.
\end{equation}
\end{lemma}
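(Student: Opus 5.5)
The plan is to average the identity $\mix(X)=c$ over the unit sphere $S^{2n-1}\subset T^{1,0}_pM$, using the Haar (unitarily invariant) probability measure $d\sigma$. Multiplying the hypothesis by $|X|^4$ gives the pointwise polynomial identity
\[
 \alpha\,\Ric^{(1)}(X,\bar X)|X|^2+\beta R(X,\bar X,X,\bar X)=c|X|^4
\]
for all $X$. Restricting to $|X|=1$ and writing $X=\sum_i X^ie_i$ in a unitary frame $e_1,\dots,e_n$ at $p$, both sides become polynomials in $X^i,\bar X^j$. I will integrate each side against $d\sigma$.

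The key inputs are the standard moment formulas on $S^{2n-1}$:
\[
 \int X^i\bar X^j\dd=\frac{\delta_{ij}}{n},
 \qquad
 \int X^i\bar X^jX^k\bar X^\ell\dd=\frac{\delta_{ij}\delta_{k\ell}+\delta_{i\ell}\delta_{kj}}{n(n+1)}.
\]
These follow from $U(n)$-invariance: the integrals must be invariant tensors, hence combinations of the Kronecker deltas. The normalizing constants then come from tracing, via $\int|X|^2\dd=1$ and $\int|X|^4\dd=1$, together with the symmetry in $i\leftrightarrow k$.

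Applying these formulas term by term, the Ricci term integrates to $\frac1n\sum_iP_{i\bar i}=s/n$. For the quartic term,
\[
 \int R(X,\bar X,X,\bar X)\dd=\sum R_{i\bar jk\bar\ell}\,\frac{\delta_{ij}\delta_{k\ell}+\delta_{i\ell}\delta_{kj}}{n(n+1)}=\frac{s+\widetilde s}{n(n+1)}.
\]
Here the first delta pairing gives $\sum_{i,k}R_{i\bar ik\bar k}=s$, and the second gives $\sum_{i,k}R_{i\bar kk\bar i}=\widetilde s$, matching \eqref{eq:traces}. The right-hand side integrates to $c$. This yields
\[
 c=\alpha\frac{s}{n}+\beta\frac{s+\widetilde s}{n(n+1)},
\]
and multiplying by $n(n+1)$ gives \eqref{eq:average}.

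The only real point of care is that I must not use any Kähler symmetry of $R$: the two delta contractions have to be kept distinct as $s$ and $\widetilde s$, with the index placements following the convention $R(X,\bar X,X,\bar X)=\sum R_{i\bar jk\bar\ell}X^i\bar X^jX^k\bar X^\ell$. Verifying the fourth-moment constant is routine. Compute $\int|X^1|^4\dd=2/(n(n+1))$, for instance by the Gaussian trick, relating Gaussian moments on $\C^n$ to sphere moments through the factor $\Gamma(n+2)/\Gamma(n)$. Then check consistency by tracing.
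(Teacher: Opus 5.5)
Your proposal is correct and follows the same route as the paper: average $\mix(X)=c$ over the unit sphere using the second and fourth moment formulas, keeping the contractions $s$ and $\widetilde s$ separate. The only difference is how the fourth-moment constant is justified. You use invariant theory or the Gaussian trick, while the paper uses the equidistribution of $(z_1+z_2)/\sqrt2$ and $z_1$ to get $A=2B$ and then normalizes with $\int|z|^4\dd=1$.
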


\begin{proof}
Choose a unitary frame at the point and write
\[
 X=\sum_rz_re_r,
 \qquad
 \sum_r|z_r|^2=1.
\]
Let $d\sigma$ be probability measure on the unit sphere in $\C^n$. Unitary invariance gives
\begin{equation}\label{eq:moments}
 \int z_i\bar z_j\dd
 =
 \frac{\delta_{ij}}n,
 \qquad
 \int z_i\bar z_jz_k\bar z_\ell\dd
 =
 \frac{
   \delta_{ij}\delta_{k\ell}
   +
   \delta_{i\ell}\delta_{kj}
 }{n(n+1)}.
\end{equation}

To make the exposition transparent, one may compute the fourth-moment constant explicitly. The invariance property with respect to diagonal unitary matrices forces the integral to vanish whenever the multisets formed by holomorphic and antiholomorphic indices do not coincide. For $n\geq2$, define
\[
 A=\int|z_1|^4\dd,
 \qquad
 B=\int|z_1z_2|^2\dd.
\]
The random variable $(z_1+z_2)/\sqrt2$ is identically distributed to $z_1$. Expanding its fourth power and invoking phase invariance yields $A=A/2+B$, so $A=2B$. Furthermore,
\[
 1
 =
 \int
 \left(\sum_r|z_r|^2\right)^2
 \dd
 =
 nA+n(n-1)B.
\]
Thus
\[
 B=\frac1{n(n+1)},
 \qquad
 A=\frac2{n(n+1)},
\]
proving \eqref{eq:moments}. Its formula also holds for $n=1$.

The second moment yields
\[
 \int P(X,\bar X)\dd=\frac{s}{n}.
\]
The two fourth-moment contractions yield
\[
 \int R(X,\bar X,X,\bar X)\dd
 =
 \frac{s+\widetilde s}{n(n+1)}.
\]
Since $X$ is unit and $\mix(X)=c$, averaging
\eqref{eq:definition} and multiplying by $n(n+1)$ proves
\eqref{eq:average}.
\end{proof}

\begin{lemma}\label{lem:directional}
Let $(M^n,J,g)$ be a balanced Hermitian manifold. If $\mix\equiv c$ with $\beta\neq0$, then every unitary basis at every
point satisfies
\begin{equation}\label{eq:directional}
 \sum_{r=1}^nH(e_r)
 =
 \frac{2s}{n+1}.
\end{equation}
\end{lemma}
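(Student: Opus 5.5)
Evaluate the hypothesis $\mix\equiv c$ on each vector of the unitary basis, sum over $r$, and then use Lemma~\ref{lem:average} together with the balanced identity $s=\widetilde s$ to eliminate $c$ and $\alpha$.

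First, since $e_r$ is a unit vector, \eqref{eq:definition} gives
\[
 c=\mix(e_r)=\alpha P_{r\bar r}+\beta H(e_r),\qquad r=1,\dots,n.
\]
Summing over $r$ and using $\sum_rP_{r\bar r}=s$ from \eqref{eq:traces}, we obtain
\[
 nc=\alpha s+\beta\sum_{r=1}^nH(e_r).
\]

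Second, because the metric is balanced, \eqref{eq:torsion-trace} gives $s=\widetilde s$ pointwise. Substituting this into \eqref{eq:average} yields
\[
 n(n+1)c=\bigl((n+1)\alpha+2\beta\bigr)s.
\]
Dividing by $n+1$ gives
\[
 nc=\alpha s+\frac{2\beta s}{n+1}.
\]

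Third, we compare the two expressions for $nc$. The terms $\alpha s$ cancel, leaving
\[
 \beta\sum_rH(e_r)=\frac{2\beta s}{n+1}.
\]
Since $\beta\neq0$, dividing by $\beta$ gives \eqref{eq:directional}.

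The only point needing care is that Lemma~\ref{lem:average} and the balanced identity are pointwise statements with no compactness or homogeneity assumption. The unitary frame used to define $s$ is arbitrary, and $s$ is frame-independent as a trace. Hence the conclusion holds for every unitary basis at every point, and nothing in the argument is a genuine obstacle.
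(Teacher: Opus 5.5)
Your proposal is correct and matches the paper's proof step for step. You sum $\mix(e_r)=c$ over the unitary basis, insert $s=\widetilde s$ from balancedness into the averaging identity, subtract the two expressions for $nc$, and divide by $\beta\neq0$; you only present the two expressions for $nc$ in the opposite order.
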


\begin{proof}
Balancedness and \eqref{eq:torsion-trace} give $s=\widetilde s$ at the chosen point. Equation~\eqref{eq:average} becomes
\begin{equation}\label{eq:averaged-c}
 nc
 =
 \alpha s+\frac{2\beta s}{n+1}.
\end{equation}
On the other hand, evaluating the mixed-curvature condition at each member of any chosen unitary frame and summing gives
\begin{equation}\label{eq:summed-c}
 \begin{aligned}
 nc
 &=
 \alpha\sum_rP(e_r,\bar e_r)
 +
 \beta\sum_rH(e_r)\\
 &=
 \alpha s+\beta\sum_rH(e_r).
 \end{aligned}
\end{equation}
Subtracting \eqref{eq:averaged-c} from \eqref{eq:summed-c} and using $\beta\neq0$ proves \eqref{eq:directional}.
\end{proof}

This cancellation proceeds solely via division by $\beta\neq0$, and never involves division by $c$, $\alpha$, or $(n+1)\alpha+2\beta$. Consequently, all choices for the sign of the curvature constant and all coefficient ratios are encompassed by this cancellation. For an arbitrary Hermitian metric, the sum appearing on the left-hand side of Equation \eqref{eq:directional} is not necessarily invariant under frame changes; the hypothesis of constant mixed curvature imposes that this expression attains the stated value in any unitary frame.

\section{Invariant geometry and holomorphic root spheres}
In this section, we turn to the geometry of the complete type-A flag manifold $\mathcal{F}_m=\mathrm{SU}(m)/T^{m-1}$. We first describe its isotropy representation and parameterize all $\mathrm{SU}(m)$-invariant Hermitian metrics on $\mathcal{F}_m$, without imposing the K\"ahler condition. The root decomposition of the tangent space yields a family of holomorphic root spheres embedded in $\mathcal{F}_m$. These holomorphic root spheres are the key submanifolds where we carry out explicit curvature computations, which will be combined with the trace identity derived in Section 2 to establish our nonexistence theorem. Realize $\Fm$ as the set of complete flags
\[
 0=E_0\subset E_1\subset\cdots\subset E_m=\C^m,
 \qquad
 \dim_\C E_k=k.
\]
Let $\epsilon_1,\ldots,\epsilon_m$ be the standard basis of $\C^m$, and let $o$ denote the standard flag
\[
 E_k^0
 =
 \operatorname{span}_{\C}
 \{\epsilon_1,\ldots,\epsilon_k\}.
\]
Its stabilizer in $\SU(m)$ is $T^{m-1}$. For $i<j$, let $v_{ij}$ be the tangent vector represented by the matrix unit $E_{ji}$ sending $\epsilon_i$ to $\epsilon_j$, and put
\[
 V_{ij}=\C v_{ij}.
\]
Then
\begin{equation}\label{eq:root-decomposition}
 T_o^{1,0}\Fm
 =
 \bigoplus_{1\leq i<j\leq m}V_{ij}.
\end{equation}
An element $\operatorname{diag}(t_1,\ldots,t_m)$ of the torus acts on $V_{ij}$ by the character
\begin{equation}\label{eq:character}
 \chi_{ij}(t)=t_jt_i^{-1}.
\end{equation}
The characters are distinct. Indeed, torus invariance gives
\[
 g(v_{ij},\bar v_{ab})
 =
 \chi_{ij}(t)
 \overline{\chi_{ab}(t)}
 g(v_{ij},\bar v_{ab}),
 \qquad
 t\in T^{m-1},
\]
so different root lines are orthogonal. Define
\begin{equation}\label{eq:root-normalization}
 x_{ij}=g(v_{ij},\bar v_{ij})>0,
 \qquad
 e_{ij}=x_{ij}^{-1/2}v_{ij}.
\end{equation}
Then $\{e_{ij}:i<j\}$ is a unitary basis at $o$. These positive parameters describe all invariant Hermitian metrics; no condition such as $d\omega=0$ is imposed. Only a basis at a point is used, and no global holomorphic unitary frame is required.

\begin{lemma}\label{lem:balanced}
Every $\SU(m)$-invariant Hermitian metric on $\Fm$ is balanced.
\end{lemma}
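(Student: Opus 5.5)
Since the metric is $\SU(m)$-invariant, so is $\omega^{n-1}$, and therefore so is $d\omega^{n-1}$. The plan is to prove balancedness by showing that $\SU(m)$-invariant forms of odd degree on $\Fm$ vanish, or more directly that the torsion trace $\eta$, an invariant $(1,0)$-form, vanishes.

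We work at the base point $o$. An invariant tensor is determined by its value at $o$, and that value must be fixed by the isotropy group $T^{m-1}$. The first step is to evaluate the invariant $(1,0)$-form $\eta$ at $o$, where it is a $T^{m-1}$-invariant element of $(T^{1,0}_o)^*$. By the root decomposition \eqref{eq:root-decomposition}, the torus acts on the dual of $V_{ij}$ by the character $\chi_{ij}^{-1}=t_it_j^{-1}$. This character is nontrivial on $T^{m-1}$ for $i\neq j$: the diagonal $\operatorname{diag}(t_1,\dots,t_m)$ with $\prod t_k=1$ may have $t_i\neq t_j$. Hence the invariant part is zero, so $\eta_o=0$, and by homogeneity $\eta\equiv0$. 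Since the paper states that balancedness is equivalent to $\eta=0$, the metric is balanced.

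The same argument can be run on $d\omega^{n-1}$ directly. This is a real invariant $(2n-1)$-form, so by Hodge duality its value at $o$ corresponds to an invariant real $1$-form, that is, an element of $T_o^*\otimes\C=\bigoplus V_{ij}^*\oplus\overline{V_{ij}^*}$. All weights occurring there are $\pm(\epsilon_i-\epsilon_j)$, which are nonzero, so there are no nonzero invariants.

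The step needing care is that the form really is $T^{m-1}$-invariant and that the relevant weights are exactly the roots. Invariance follows because $T^{m-1}$ acts holomorphically and isometrically fixing $o$, and $\eta$ is built naturally from $g$ and $J$ through the Chern connection. We can avoid Hodge-star conventions by using only $\eta$, together with the equivalence with $d\omega^{n-1}=0$ quoted in Section~2.

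Alternatively, one can avoid the equivalence altogether. The form $\omega^{n-1}$ is determined by the invariant form $*\omega$, and $d(\omega^{n-1})$ is an invariant $(2n-1)$-form. Any invariant form of top degree minus one contracts with the invariant volume to an invariant $1$-form, which vanishes by the weight argument above. Either route gives the claim.
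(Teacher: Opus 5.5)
Your argument is correct and is essentially the paper's. The paper runs the same ``no zero weight of $T^{m-1}$ on $T_o^*\Fm$'' argument on the invariant $1$-form $\delta_g\omega=-\frac{1}{(n-1)!}*_g\,d\omega^{n-1}$, which is exactly your Hodge-dual route, and your primary route via $\eta$ simply substitutes another invariant $1$-form whose vanishing characterizes balancedness.

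Do drop the opening remark that all invariant odd-degree forms vanish. It is false: on $\mathcal F_3$ the weight $\chi_{12}^{-1}\chi_{23}^{-1}\chi_{13}$ is trivial, and $d\omega$ is a nonzero invariant $3$-form for every non-K\"ahler invariant metric. Nothing in your actual argument relies on that remark.
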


\begin{proof}
Set $\mathcal{F}_m=\mathrm{SU}(m)/T^{m-1}$. Let $o=eT^{m-1}$ denote the standard flag, and take $g$ to be an $\SU(m)$-invariant Hermitian metric whose associated fundamental form is $\omega$. First, $\delta_g\omega$ is an invariant real one-form.  Indeed, for every $a\in\SU(m)$, the left translation $L_a$ preserves $g$, the complex orientation, and $\omega$.  An orientation-preserving isometry commutes with the Hodge star; hence it also commutes with the codifferential. Therefore
\[
 L_a^*(\delta_g\omega)
   =\delta_g(L_a^*\omega)
   =\delta_g\omega.
\]
Since the isotropy group at $o$ is $T^{m-1}$, it follows that
\[
 (\delta_g\omega)_o\in (T_o^*\Fm)^{T^{m-1}}.
\]
We next prove that this fixed subspace is zero.  Recall that
\[
 T_o^{1,0}\Fm=\bigoplus_{1\leq i<j\leq m}V_{ij},
 \qquad V_{ij}=\C v_{ij},
\]
and that $t=\operatorname{diag}(t_1,\ldots,t_m)\in T^{m-1}$ acts on
$V_{ij}$ through the character
\[
 \chi_{ij}(t)=t_jt_i^{-1}.
\]
Consequently, the complexified real tangent representation and its dual have the weight decompositions
\[
 T_o\Fm\otimes_{\R}\C
 =\bigoplus_{i<j}
   \left(\C_{\chi_{ij}}\oplus\C_{\chi_{ij}^{-1}}\right),
\]
and
\[
 T_o^*\Fm\otimes_{\R}\C
 =\bigoplus_{i<j}
   \left(\C_{\chi_{ij}^{-1}}\oplus\C_{\chi_{ij}}\right),
\]
where $\C_\chi$ denotes the one-dimensional representation with character $\chi$. Passing to the dual only replaces each character by its inverse.

Every character occurring in the last decomposition is nontrivial.  For a fixed pair $i<j$, consider the one-parameter subgroup of $T^{m-1}$ defined by
\[
 t_i=e^{-\ii\theta},\qquad
 t_j=e^{\ii\theta},\qquad
 t_k=1\quad(k\neq i,j).
\]
Its determinant equals one, and
\[
 \chi_{ij}(t)=e^{2\ii\theta}.
\]
Thus neither $\chi_{ij}$ nor $\chi_{ij}^{-1}$ is the trivial character. Since a torus representation is the direct sum of its weight spaces, a fixed vector can have a component only in the zero-weight space.  There is no such space here, and therefore
\[
 \left(T_o^*\Fm\otimes_{\R}\C\right)^{T^{m-1}}=\{0\}.
\]
If a nonzero real covector were fixed by $T^{m-1}$, its complexification would be a nonzero fixed vector in the complexified representation.  Hence
\[
 (T_o^*\Fm)^{T^{m-1}}=\{0\}.
\]
It follows that $(\delta_g\omega)_o=0$.  The $\SU(m)$-invariance of $\delta_g\omega$ and the transitivity of the group action then give
\[
 \delta_g\omega=0
\]
on all of $\Fm$.

Finally, we relate this equation to balancedness.  On a Hermitian manifold of complex dimension $n$, endowed with the complex orientation, the pointwise Hermitian linear-algebra identity is
\[
 *_g\omega=\frac{\omega^{n-1}}{(n-1)!}.
\]
For a real two-form in real dimension $2n$, the codifferential is
$\delta_g=-*_g d *_g$.  Consequently,
\[
 \delta_g\omega
   =-\frac{1}{(n-1)!}*_g d\omega^{n-1}.
\]
The Hodge star is invertible, so
\[
 \delta_g\omega=0
 \quad\Longleftrightarrow\quad
 d\omega^{n-1}=0.
\]
Thus every $\SU(m)$-invariant Hermitian metric on $\Fm$ is balanced, as claimed in Lemma~3.1.
\end{proof}
For $i<j$, let $S_{ij}\simeq\SU(2)$ be the subgroup acting on $\operatorname{span}_{\C}\{\epsilon_i,\epsilon_j\}$ and fixing its orthogonal complement. Its orbit through $o$ will be denoted by $C_{ij}$. It is a holomorphic copy of $\CP^1$, as can be seen directly. For a line $L$ in that two-dimensional space, set
\begin{equation}\label{eq:sphere-flags}
 E_k(L)
 =
 \begin{cases}
 E_k^0,
 &k<i\text{ or }k\geq j,\\[2mm]
 \operatorname{span}_{\C}
 \{\epsilon_a:1\leq a\leq k,\ a\neq i\}
 \oplus L,
 &i\leq k<j.
 \end{cases}
\end{equation}
These subspaces are nested and have dimension $k$. For $i\leq k<j$, the fixed summand has dimension $k-1$ and is disjoint from $L$. On the chart $L(w) = \operatorname{span}_{\C} (\epsilon_i+w\epsilon_j)$, the subspaces depend holomorphically on $w$, and the same holds in the chart at infinity. Projection of $E_i(L)$ to the fixed plane $\operatorname{span}_{\C} \{\epsilon_i,\epsilon_j\}$ recovers $L$ and defines a holomorphic inverse on the image. Thus the map is a holomorphic embedding. Its image is $S_{ij}\cdot o$, with stabilizer $U(1)$, and its derivative at $w=0$ is $v_{ij}$.

The induced metric is $S_{ij}$-invariant. Since the tangent isotropy representation of $\SU(2)/U(1)$ is a complex line, it is a positive multiple of the Fubini--Study metric. Its value at $w=0$ is fixed by \eqref{eq:root-normalization}, giving
\begin{equation}\label{eq:sphere-metric}
 \omega|_{C_{ij}}
 =
 \frac{\ii x_{ij}}{(1+|w|^2)^2}
 \,dw\wedge d\bar w.
\end{equation}
In particular,
\begin{equation}\label{eq:area}
 \int_{C_{ij}}\omega
 =
 4\pi x_{ij}
 \int_0^\infty
 \frac{r\,dr}{(1+r^2)^2}
 =
 2\pi x_{ij}.
\end{equation}
Thus the sphere-area parameter and the squared norm of the root vector have the same normalization.

\begin{lemma}\label{lem:second-fundamental}
For every invariant Hermitian metric on $\Fm$, each $C_{ij}$ has vanishing Chern second fundamental form.
\end{lemma}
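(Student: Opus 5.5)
The plan is to use equivariance under a torus that fixes $C_{ij}$ pointwise at $o$, together with the fact that $2\alpha_{ij}$ is not a root. Recall that the Chern second fundamental form of a complex submanifold $C\subset M$ is
\[
 \mathrm{II}(X,Y)=(\nabla^c_XY)^{\perp},
\]
where $\nabla^c$ is the Chern connection and $\perp$ denotes the projection onto the $g$-orthogonal complement $N^{1,0}$ of $T^{1,0}C$. Since $\nabla^c$ preserves types, $\mathrm{II}$ is a tensor $T^{1,0}C\otimes T^{1,0}C\to N^{1,0}$, and a tensor $T^{0,1}C\otimes T^{1,0}C\to N^{1,0}$. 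The component $\mathrm{II}(\bar X,Y)$ vanishes automatically: on holomorphic sections of $T^{1,0}C$ we have $\nabla^c_{\bar X}Y=\bar\partial$-part $=0$. So only the $(1,0)$-$(1,0)$ part must be shown to vanish.

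\textbf{Reduction to one point.} The submanifold $C_{ij}$ is an $S_{ij}$-orbit, and $S_{ij}\subset\SU(m)$ acts by holomorphic isometries preserving $C_{ij}$. Hence $S_{ij}$ preserves $\nabla^c$, $T^{1,0}C_{ij}$, $N^{1,0}$ and $\mathrm{II}$. It therefore suffices to prove $\mathrm{II}_o(v_{ij},v_{ij})=0$ at the point $o$, because $T^{1,0}_oC_{ij}=V_{ij}$ is one-dimensional.

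\textbf{Torus equivariance.} The torus $T^{m-1}$ fixes $o$. Moreover, each $t\in T^{m-1}$ normalizes $S_{ij}$ (diagonal matrices preserve $\operatorname{span}\{\epsilon_i,\epsilon_j\}$), so $t\cdot C_{ij}=t S_{ij}t^{-1}\cdot o=C_{ij}$. Thus $t$ is a holomorphic isometry preserving $C_{ij}$ and fixing $o$, and naturality gives
\[
 t_*\,\mathrm{II}_o(v,v)=\mathrm{II}_o(t_*v,t_*v).
\]
With $v=v_{ij}$, the right-hand side equals $\chi_{ij}(t)^2\,\mathrm{II}_o(v_{ij},v_{ij})$. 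On the other hand, $\mathrm{II}_o(v_{ij},v_{ij})$ lies in
\[
 N^{1,0}_o=\bigoplus_{(a,b)\neq(i,j)}V_{ab},
\]
which is orthogonal to $V_{ij}$ by the orthogonality of distinct root lines. Decomposing the vector into components in the $V_{ab}$, each component $w_{ab}$ must satisfy
\[
 \chi_{ab}(t)\,w_{ab}=\chi_{ij}(t)^2\,w_{ab}\quad\text{for all } t.
\]

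\textbf{Main obstacle.} The key point is verifying $\chi_{ab}\neq\chi_{ij}^2$ as characters of $T^{m-1}$ (determinant one), which is the statement that twice a root is not a root. For this, I would use the one-parameter subgroup from the proof of Lemma~\ref{lem:balanced}, with $t_i=e^{-\ii\theta}$, $t_j=e^{\ii\theta}$ and all other entries $1$. Along it, $\chi_{ij}^2=e^{4\ii\theta}$, whereas $\chi_{ab}(t)=t_bt_a^{-1}$ takes only the values $e^{k\ii\theta}$ with $|k|\le2$. So the equality fails for generic $\theta$, each $w_{ab}$ vanishes, and hence $\mathrm{II}_o=0$.

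I expect the care needed to lie in justifying naturality of $\mathrm{II}$ under ambient holomorphic isometries preserving the submanifold, which is routine since such maps preserve the Chern connection.
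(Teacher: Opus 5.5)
Your proposal is correct and follows the paper's proof essentially step for step. The $(0,1)$ part vanishes because $T^{1,0}C_{ij}$ is a holomorphic subbundle, torus equivariance at $o$ puts $\mathrm{II}_o(v_{ij},v_{ij})$ in the $\chi_{ij}^2$-weight space of the normal directions (which is zero), and $S_{ij}$-transitivity extends the vanishing over the whole sphere. The only cosmetic difference is how you separate $\chi_{ij}^2$ from the root characters: you restrict to the one-parameter subgroup, getting weight $4$ against $|k|\le 2$, whereas the paper compares exponent vectors modulo $(1,\ldots,1)$; both arguments are valid.
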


\begin{proof}
Along a holomorphic submanifold, $T^{1,0}C_{ij}$ is a holomorphic subbundle of $T^{1,0}\Fm|_{C_{ij}}$. The Chern connection has $(0,1)$ part equal to the holomorphic structure operator. Its $(0,1)$ covariant derivative therefore has zero normal projection. The remaining second fundamental form is
\[
 \mathrm{II}(X,Y)
 =
 (\nabla_XY)^\perp,
 \qquad
 X,Y\in T^{1,0}C_{ij},
\]
Here the orthogonal projection is defined with respect to the ambient metric. This tensor is complex bilinear. Upon normal projection, the derivative of the coefficient multiplying $Y$ yields a tangent vector.

Holomorphic isometries preserve both the Chern connection and the orthogonal normal projection. The torus fixes $o$ and keeps $C_{ij}$ invariant. It preserves the defining plane and all fixed summands in \eqref{eq:sphere-flags}. Thus the tensor evaluated at $o$ defines a torus-equivariant map.
\begin{equation}\label{eq:ii-map}
 V_{ij}\otimes V_{ij}
 \longrightarrow
 \bigoplus_{\substack{(a,b)\neq(i,j)\\a<b}}V_{ab}.
\end{equation}
Write
\[
 \mathrm{II}_o(v_{ij},v_{ij})
 =
 \sum_{\substack{(a,b)\neq(i,j)\\a<b}}
 A_{ab}v_{ab}.
\]
Equivariance implies
\[
 \chi_{ij}(t)^2A_{ab}
 =
 \chi_{ab}(t)A_{ab},
 \qquad
 t\in T^{m-1}.
\]
The exponent vector of $\chi_{ij}^2$ has entries $-2$ at $i$ and $2$ at $j$, while that of every root character has entries $-1$ and $1$. Both exponent vectors have sum zero. Equality of their characters on the determinant-one torus would make their difference a multiple of $(1,\ldots,1)$; the zero sum forces that multiple to vanish. The exponent vectors cannot agree, so every $A_{ab}$ is zero. Therefore \eqref{eq:ii-map} vanishes.

Finally, $S_{ij}$ acts transitively on $C_{ij}$ by ambient holomorphic isometries. The vanishing at $o$ transports to every point of the sphere.
\end{proof}

\begin{lemma}\label{lem:hsc}
If $e_{ij}$ is a unit vector in $V_{ij}$, then
\begin{equation}\label{eq:root-hsc}
 H(e_{ij})
 =
 \frac2{x_{ij}}.
\end{equation}
\end{lemma}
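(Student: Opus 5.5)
The plan is to compare the ambient holomorphic sectional curvature in the direction $e_{ij}$ with the curvature of the induced metric on the root sphere $C_{ij}$. By homogeneity under $\SU(m)$, it suffices to work at the point $o$. There $e_{ij}$ is tangent to $C_{ij}$, and we may assume $e_{ij}=x_{ij}^{-1/2}v_{ij}$ after multiplying by a unit complex scalar. Such a phase does not change $H$.

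The first step is the Gauss equation for the Chern connection of a holomorphic submanifold. Let $N=C_{ij}$ carry the induced Hermitian metric, and let $R^N$ denote its Chern curvature. The induced Chern connection on $T^{1,0}N$ is the tangential projection of the ambient Chern connection. This holds because that projection is metric-compatible, and its $(0,1)$ part is $\bar\partial$ since $T^{1,0}N$ is a holomorphic subbundle.

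The standard Gauss formula for a holomorphic subbundle with second fundamental form $\mathrm{II}$ then reads
\[
 R(X,\bar X,X,\bar X)
 =
 R^N(X,\bar X,X,\bar X)
 +
 |\mathrm{II}(X,X)|^2.
\]
With the sign convention \eqref{eq:curvature}, curvature decreases on subbundles, so the correction term enters with this sign. I would verify the sign directly from \eqref{eq:curvature} using a local holomorphic frame adapted to the subbundle. Lemma~\ref{lem:second-fundamental} gives $\mathrm{II}\equiv0$, so the correction vanishes and $H(e_{ij})=H^N(e_{ij})$.

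The second step is to compute $H^N$. By \eqref{eq:sphere-metric}, the induced metric is $g_{w\bar w}=x_{ij}(1+|w|^2)^{-2}$. The normalization stated after \eqref{eq:curvature} gives holomorphic sectional curvature $2/x_{ij}$ for this metric. As a check, at $w=0$ the second term of \eqref{eq:curvature} vanishes because $\partial_w g_{w\bar w}=0$ there. Also $-\partial_w\partial_{\bar w}\bigl[x(1+|w|^2)^{-2}\bigr]\big|_{0}=2x$, and dividing by $g_{w\bar w}^2=x^2$ yields $2/x$.

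Since $e_{ij}$ corresponds to $x_{ij}^{-1/2}\partial_w$ at $w=0$, we obtain $H(e_{ij})=2/x_{ij}$.

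The main obstacle is justifying the Gauss equation with the correct sign and the correct notion of second fundamental form for the Chern connection. In particular, one must make sure that only the $(1,0)$ derivative $\mathrm{II}(X,Y)=(\nabla_XY)^\perp$ enters. The plan is to use a local holomorphic frame $\{s_1,\dots\}$ of $T^{1,0}\Fm$ near $o$ whose first element spans $T^{1,0}N$, and to write the curvature as $\bar\partial(\partial h\,h^{-1})$. Schur-complement manipulation of the block Hermitian matrix then produces the subtracted term $-|\mathrm{II}|^2$ inside $R^N$, which gives the displayed identity. Because $\mathrm{II}=0$ here, only the sign-independent conclusion $R=R^N$ on $N$-directions is actually needed, and this reduces the risk.
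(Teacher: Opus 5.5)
Your proposal is correct and follows the paper's proof essentially step for step. You identify the induced Chern connection on $T^{1,0}C_{ij}$ as the tangential projection of the ambient one, and you apply the subbundle Gauss identity with the second fundamental form killed by Lemma~\ref{lem:second-fundamental}; you then evaluate the curvature of $x_{ij}(1+|w|^2)^{-2}$ at $w=0$ to get $2x_{ij}$, which after normalization gives $2/x_{ij}$ (your Schur-complement check of the sign is optional, since $\mathrm{II}=0$ makes the sign irrelevant).
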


\begin{proof}
Set $E=T^{1,0}\Fm|_{C_{ij}}, S=T^{1,0}C_{ij}$. The orthogonal projection of the ambient Chern connection onto $S$ is metric-compatible and has $(0,1)$ part $\bar\partial_S$. It is therefore the Chern connection of the induced metric. With the convention \eqref{eq:curvature}, the subbundle Gauss identity gives
\begin{equation}\label{eq:chern-gauss}
 R^E(X,\bar X,Y,\bar Y)
 =
 R^S(X,\bar X,Y,\bar Y)
 +
 \|\mathrm{II}(X,Y)\|_g^2
\end{equation}
for tangent vectors $X,Y$. Lemma~\ref{lem:second-fundamental} makes the last term zero, so the ambient and intrinsic holomorphic
sectional curvatures agree in the tangent direction. This is a Hermitian holomorphic-subbundle identity and does not require a
K\"ahler metric.Set
\[
 h(w)
 =
 x_{ij}(1+|w|^2)^{-2}.
\]
At $w=0$,
\[
 h(0)=x_{ij},
 \qquad
 \partial_wh(0)=0,
 \qquad
 \partial_w\partial_{\bar w}h(0)
 =
 -2x_{ij}.
\]
Formula~\eqref{eq:curvature} yields
\[
 \begin{aligned}
 R_{w\bar w w\bar w}(0)
 &=
 -h_{w\bar w}(0)
 +
 h(0)^{-1}|h_w(0)|^2\\
 &=
 2x_{ij}.
 \end{aligned}
\]
Dividing by $h(0)^2=x_{ij}^2$ gives
\eqref{eq:root-hsc}.
\end{proof}

\section{Anticanonical degrees and the first Chern Ricci curvature}

The holomorphic sectional curvature in Lemma~\ref{lem:hsc} is determined by the induced sphere metric. The first Ricci contraction involves all tangent directions of the ambient flag manifold. Its value along the same sphere can instead be recovered from the first Chern class and invariance.

Let $\mathcal E_k$ be the rank-$k$ tautological holomorphic bundle on $\Fm$, and let $\mathcal L_k = \mathcal E_k/\mathcal E_{k-1}, 1\leq k\leq m,$
be the successive quotient line bundles. The determinant of the tangent bundle is
\begin{equation}\label{eq:det-tangent}
 \det T^{1,0}\Fm
 \simeq
 \bigotimes_{a<b}
 \Hom(\mathcal L_a,\mathcal L_b)
 \simeq
 \bigotimes_{k=1}^m
 \mathcal L_k^{\otimes(2k-m-1)}.
\end{equation}
Negative powers denote powers of the dual line bundle. To justify the isomorphisms, use
\[
 \Fm=\SL(m,\C)/B,
\]
where $B$ is the upper-triangular Borel subgroup. The holomorphic tangent bundle is associated to the $B$-representation
$\mathfrak{sl}(m,\C)/\mathfrak b,$ whose diagonal weights are $t_bt_a^{-1}$ for $a<b$. The unipotent radical acts unipotently on this representation and therefore has determinant one. Its determinant character is consequently
\[
 \prod_{a<b}\frac{t_b}{t_a}
 =
 \prod_{k=1}^m
 t_k^{(k-1)-(m-k)}
 =
 \prod_{k=1}^m
 t_k^{2k-m-1}.
\]
The fiber of $\mathcal L_k$ at the standard flag has character $t_k$, so the tensor products in \eqref{eq:det-tangent} have exactly this $B$-character. This proves an isomorphism of determinant line bundles, without asserting a holomorphic splitting of the tangent bundle into root line bundles. Using
\[
 c_1(T)=c_1(\det T),
 \qquad
 c_1(L\otimes Q)=c_1(L)+c_1(Q),
 \qquad
 c_1(L^*)=-c_1(L),
\]
gives
\begin{equation}\label{eq:c1-tangent}
 \begin{aligned}
 c_1(\Fm)
 &=
 \sum_{a<b}
 \bigl(
   c_1(\mathcal L_b)
   -
   c_1(\mathcal L_a)
 \bigr)\\
 &=
 \sum_{k=1}^m
 (2k-m-1)c_1(\mathcal L_k).
 \end{aligned}
\end{equation}

\begin{lemma}\label{lem:degree}
For every $i<j$, the anticanonical degree along $C_{ij}$ is
\begin{equation}\label{eq:degree}
 \int_{C_{ij}}c_1(\Fm)
 =
 2(j-i).
\end{equation}
\end{lemma}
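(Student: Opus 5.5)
The plan is to evaluate \eqref{eq:c1-tangent} on $C_{ij}$ by computing the degree of each quotient line bundle $\mathcal L_k$ restricted to the sphere, using the explicit description \eqref{eq:sphere-flags} of the flags along $C_{ij}$. Write $W=\operatorname{span}_{\C}\{\epsilon_i,\epsilon_j\}$, so $C_{ij}\simeq\mathbb P(W)\simeq\CP^1$. Let $\mathcal O(-1)\subset W\otimes\mathcal O$ be the tautological line bundle, whose fiber at $L$ is $L$. Also put $F_k=\operatorname{span}_{\C}\{\epsilon_a:1\leq a\leq k,\ a\neq i\}$, a fixed subspace. Since \eqref{eq:c1-tangent} is linear in the $c_1(\mathcal L_k)$, it suffices to know each $\int_{C_{ij}}c_1(\mathcal L_k)$.

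I would carry out the case analysis on $k$ directly from \eqref{eq:sphere-flags}.
\begin{itemize}
\item For $k<i$ or $k>j$, both $E_k$ and $E_{k-1}$ are constant along $C_{ij}$. Then $\mathcal L_k|_{C_{ij}}$ is holomorphically trivial and has degree $0$.
\item For $k=i$, we have $E_i=F_{i-1}\oplus L$ with $F_{i-1}=E^0_{i-1}=E_{i-1}$. Hence $\mathcal L_i|_{C_{ij}}\simeq\mathcal O(-1)$, of degree $-1$.
\item For $i<k<j$, we have $E_k/E_{k-1}=(F_k\oplus L)/(F_{k-1}\oplus L)\simeq F_k/F_{k-1}$. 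This is a constant line, so the degree is $0$.
\item For $k=j$, we have $E_j=E^0_j=F_{j-1}\oplus W$ and $E_{j-1}=F_{j-1}\oplus L$. Hence $\mathcal L_j|_{C_{ij}}\simeq W/L$, the universal quotient $\mathcal O(1)$, of degree $+1$.
\end{itemize}
As a consistency check, $\bigotimes_{k=i}^{j}\mathcal L_k\simeq\det(E_j/E_{i-1})$ is constant along $C_{ij}$, and indeed the degrees sum to zero.

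Each of these isomorphisms should be an isomorphism of holomorphic line bundles over the whole sphere, not just fiberwise. I would justify this by noting that the maps $F_{i-1}\oplus L\to L$ and $W\to W/L$ are induced by fixed linear projections. They are therefore holomorphic in both charts $L(w)$ and the chart at infinity. The degrees of $\mathcal O(\pm1)$ on $\mathbb P(W)$ are standard; the sign convention must match, namely that the tautological subbundle has degree $-1$.

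Substituting into \eqref{eq:c1-tangent}, only $k=i$ and $k=j$ contribute:
\[
 \int_{C_{ij}}c_1(\Fm)=(2j-m-1)\cdot1+(2i-m-1)\cdot(-1)=2(j-i),
\]
which is \eqref{eq:degree}. The main obstacle is bookkeeping rather than substance: one must keep the orientation and sign conventions consistent, so that $\mathcal L_i$ restricts to the tautological subbundle and $\mathcal L_j$ to the quotient rather than the reverse. As an independent cross-check I would take $m=2$, $(i,j)=(1,2)$: the formula gives $\int c_1(\CP^1)=2$, as it should.
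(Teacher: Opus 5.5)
Your proposal is correct and follows essentially the same route as the paper: you restrict each $\mathcal L_k$ to $C_{ij}$ via \eqref{eq:sphere-flags}, identify $\mathcal L_i$ with the tautological $\mathcal O(-1)$ and $\mathcal L_j$ with the quotient $W/L\simeq\mathcal O(1)$, show that all other $\mathcal L_k$ are trivial (either constant quotients, or spanned by the class of $\epsilon_k$ for $i<k<j$), and substitute into \eqref{eq:c1-tangent}. Your added remarks on why the identifications are holomorphic, and the consistency check via $\det(E_j/E_{i-1})$, are useful supplements but do not change the argument.
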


\begin{proof}
Put $W=\operatorname{span}_{\C} \{\epsilon_i,\epsilon_j\}$ and regard $C_{ij}$ as the projective line of lines in $W$. The tautological exact sequence is
\[
 0
 \longrightarrow
 \mathcal O(-1)
 \longrightarrow
 \underline W
 \longrightarrow
 Q
 \longrightarrow
 0.
\]
Here $\underline W$ is the trivial rank-two bundle. The line bundle $Q$ has degree $1$, and hence is isomorphic to $\mathcal O(1)$. Construction~\eqref{eq:sphere-flags} identifies
\[
 \mathcal L_i|_{C_{ij}}
 \simeq
 \mathcal O(-1),
 \qquad
 \mathcal L_j|_{C_{ij}}
 \simeq
 Q
 \simeq
 \mathcal O(1).
\]
For $i<k<j$, the class of the fixed vector $\epsilon_k$ gives a nowhere-vanishing section of $\mathcal L_k|_{C_{ij}}$. For $k<i$ or $k>j$, the successive quotient is fixed. All these other line bundles are therefore trivial, and
\[
 \deg(\mathcal L_i|_{C_{ij}})=-1,
 \qquad
 \deg(\mathcal L_j|_{C_{ij}})=1,
 \qquad
 \deg(\mathcal L_k|_{C_{ij}})=0
 \quad(k\neq i,j).
\]
Substituting into \eqref{eq:c1-tangent} yields
\[
 -(2i-m-1)+(2j-m-1)
 =
 2(j-i).
 \qedhere
\]
\end{proof}

\begin{lemma}\label{lem:root-ricci}
For an invariant Hermitian metric on $\Fm$ and a unit vector
$e_{ij}\in V_{ij}$,
\begin{equation}\label{eq:root-ricci}
 P(e_{ij},\bar e_{ij})
 =
 \frac{2(j-i)}{x_{ij}}.
\end{equation}
Consequently,
\begin{equation}\label{eq:flag-scalar}
 s
 =
 \sum_{i<j}
 \frac{2(j-i)}{x_{ij}}.
\end{equation}
\end{lemma}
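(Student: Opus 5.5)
The plan is to combine invariance of the Ricci form $\rho^{(1)}$ with the degree computation in Lemma~\ref{lem:degree}. I will first show that $\rho^{(1)}$ restricted to $C_{ij}$ is an $S_{ij}$-invariant $(1,1)$-form, hence a constant multiple of $\omega|_{C_{ij}}$. Then I will fix the constant by integrating, using \eqref{eq:area} and \eqref{eq:ricci-form}.

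First, $\det(g_{i\bar j})$ defines a Hermitian metric on the anticanonical bundle $\det T^{1,0}\Fm$, and $\rho^{(1)}$ is its curvature form. Since $\SU(m)$ acts by holomorphic isometries, it preserves this induced metric, so $\rho^{(1)}$ is an $\SU(m)$-invariant real closed $(1,1)$-form on $\Fm$. At $o$, the Hermitian form $P$ is $T^{m-1}$-invariant. The argument used for $g$ in Section~3 (distinct characters $\chi_{ij}$) then shows that distinct root lines are $P$-orthogonal. This is consistent with, but not needed for, the diagonal value $P(e_{ij},\bar e_{ij})$.

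Next, pull back to $C_{ij}$. The restriction $\iota^*\rho^{(1)}$ is a real $(1,1)$-form on the sphere, invariant under $S_{ij}$ because $S_{ij}\subset\SU(m)$ preserves $C_{ij}$. The space of such forms at the base point is one-dimensional, since $U(1)$ acts on the tangent line by a nontrivial character, and the same holds for $\omega|_{C_{ij}}$. Hence $\iota^*\rho^{(1)}=\lambda\,\omega|_{C_{ij}}$ for a constant $\lambda$. Evaluating at $o$ on $v_{ij}$ gives $P(v_{ij},\bar v_{ij})=\lambda x_{ij}$, that is, $P(e_{ij},\bar e_{ij})=\lambda$. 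This uses the fact that the pullback of $\ii\sum P_{a\bar b}dz^a\wedge d\bar z^b$ evaluated on the tangent vector $v_{ij}$ is just $P(v_{ij},\bar v_{ij})$.

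To determine $\lambda$, integrate over $C_{ij}$. By \eqref{eq:ricci-form} and Lemma~\ref{lem:degree},
\[
 \int_{C_{ij}}\rho^{(1)}=2\pi\int_{C_{ij}}c_1(\Fm)=4\pi(j-i),
\]
while \eqref{eq:area} gives $\int_{C_{ij}}\lambda\,\omega=2\pi x_{ij}\lambda$. Therefore $\lambda=2(j-i)/x_{ij}$, which is \eqref{eq:root-ricci}. Since $\{e_{ij}\}$ is a unitary basis at $o$, the formula for $s$ in \eqref{eq:flag-scalar} follows by summing, and it holds at every point by homogeneity.

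The main point of care is the claim that $\rho^{(1)}$ is invariant, and that its de Rham class pairs with $C_{ij}$ exactly as $2\pi c_1$. This only requires \eqref{eq:ricci-form} and the fact that the group acts holomorphically and isometrically. A second point is the sign and orientation conventions: the $\CP^1$ area computation in \eqref{eq:area} and the degree $+2(j-i)$ must be taken with the same complex orientation. This is ensured because $C_{ij}$ is a holomorphic curve oriented by its complex structure.
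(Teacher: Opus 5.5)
Your proposal is correct and follows essentially the same route as the paper: $\SU(m)$-invariance makes $\rho^{(1)}|_{C_{ij}}$ a constant multiple of $\omega|_{C_{ij}}$, the constant is fixed by comparing $\int_{C_{ij}}\rho^{(1)}=4\pi(j-i)$ with the area $2\pi x_{ij}$, and \eqref{eq:flag-scalar} then follows by summing over the unitary root basis and using homogeneity. One small remark on your justification: the space of real $2$-forms on the tangent plane of $C_{ij}$ is one-dimensional for dimensional reasons alone, not because of the nontrivial $U(1)$ character. The constancy of the ratio comes from both forms being $S_{ij}$-invariant with $S_{ij}$ transitive, so this does not affect the argument.
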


\begin{proof}
The ambient first Chern Ricci form $\rho^{(1)}$ is invariant under $\SU(m)$ because the metric and its Chern connection are invariant. Since $C_{ij}$ has real dimension two and $\omega|_{C_{ij}}$ is nowhere zero, write
\[
 \rho^{(1)}|_{C_{ij}}
 =
 f_{ij}\omega|_{C_{ij}}.
\]
Both forms are $S_{ij}$-invariant, so $f_{ij}$ is invariant. Transitivity on the connected sphere makes it a real constant
$r_{ij}$. Thus invariance, together with the degree calculation, determines the pointwise ratio. Equations~\eqref{eq:ricci-form}, \eqref{eq:area}, and \eqref{eq:degree} give
\[
 \begin{aligned}
 r_{ij}\,2\pi x_{ij}
 &=
 \int_{C_{ij}}\rho^{(1)}\\
 &=
 2\pi\int_{C_{ij}}c_1(\Fm)\\
 &=
 4\pi(j-i).
 \end{aligned}
\]
Hence
\[
 r_{ij}
 =
 \frac{2(j-i)}{x_{ij}}.
\]
For a unit tangent vector,
\[
 \omega(e_{ij},\bar e_{ij})=\ii,
 \qquad
 \rho^{(1)}(e_{ij},\bar e_{ij})
 =
 \ii P(e_{ij},\bar e_{ij}),
\]
which proves \eqref{eq:root-ricci}. The orthogonal root lines form a unitary basis at $o$, so taking the trace gives \eqref{eq:flag-scalar}. Invariance gives the same scalar at every point.
\end{proof}

The form restricted in this argument is the first Chern Ricci form of the ambient tangent bundle, not the Ricci form of the sphere's own tangent bundle. This distinction accounts for the factor $j-i$: the intrinsic tangent bundle of $\CP^1$ has degree two, whereas the restricted ambient anticanonical bundle has degree $2(j-i)$.

\section{Proof of the main theorem and boundary cases}

\begin{proof}[Proof of Theorem~\ref{thm:main}]
Suppose that an invariant Hermitian metric satisfies $\mix\equiv c$ with $\beta\neq0$. The metric is balanced by Lemma~\ref{lem:balanced}, so Lemma~\ref{lem:directional} applies to the unitary root basis $\{e_{ij}:i<j\}$. Substitution of \eqref{eq:root-hsc} and \eqref{eq:flag-scalar} gives
\[
 \sum_{i<j}\frac2{x_{ij}}
 =
 \frac2{n+1}
 \sum_{i<j}
 \frac{2(j-i)}{x_{ij}}.
\]
Multiplying by $(n+1)/2$ and moving all terms to one side, we obtain
\begin{equation}\label{eq:contradiction}
 \sum_{i<j}
 \frac{n+1-2(j-i)}{x_{ij}}
 =
 0.
\end{equation}
For each root, $j-i\leq m-1$. Therefore
\begin{align}\label{eq:nonnegative}
 n+1-2(j-i)
 &\geq
 \frac{m(m-1)}2+1-2(m-1)
 \notag\\
 &=
 \frac{(m-2)(m-3)}2
 \geq0.
\end{align}
Every denominator in \eqref{eq:contradiction} is strictly positive. For any simple root, $j=i+1$, its numerator is $n+1-2=n-1>0,$ since $m\geq3$ gives $n\geq3$. Thus the left-hand side of \eqref{eq:contradiction} is strictly positive, contradicting its vanishing. This proves the theorem.
\end{proof}
The proof finds an obstruction with no dependence on parameters. Comparing the spherical average against the sum over root directions eliminates $\alpha$ and $c$. No special ratio between $\alpha$ and $\beta$ requires inspection. When $m=3$, the highest-root coefficient is zero. The simple-root coefficients remain positive. When $m\geq4$, every coefficient in \eqref{eq:contradiction} is strictly positive.

\begin{example}\label{ex:threefold}
On $\mathcal F_3=\SU(3)/T^2$, set
\[
 x=x_{12},
 \qquad
 y=x_{23},
 \qquad
 z=x_{13}.
\]
The directional values are
\[
 (H_{12},H_{23},H_{13})
 =
 (2/x,2/y,2/z),
\]
and
\[
 (P_{12},P_{23},P_{13})
 =
 (2/x,2/y,4/z).
\]
Since $n=3$, the necessary trace identity would be
\[
 \frac2x+\frac2y+\frac2z
 =
 \frac12
 \left(
   \frac2x+\frac2y+\frac4z
 \right).
\]
It reduces to
\[
 \frac1x+\frac1y=0,
\]
which is impossible. Thus the threefold obstruction holds for every positive triple $(x,y,z)$; it does not depend on the scale in the nonsimple root direction. In particular, the previously studied Wallach metric is one member of the excluded family.
\end{example}

\begin{remark}[The K\"ahler subcone and the coefficient restriction]
In the normalization above, invariant K\"ahler metrics satisfy the additive relations
\[
 x_{ik}=x_{ij}+x_{jk},
 \qquad
 i<j<k.
\]
The proof never uses these relations, so it treats the full Hermitian cone. To see concretely why $\beta\neq0$ is needed, choose
\[
 x_{ij}
 =
 \frac{2(j-i)}\lambda,
 \qquad
 \lambda>0.
\]
These parameters satisfy the additive relations and define an invariant K\"ahler metric. Formula~\eqref{eq:root-ricci} gives
\[
 P(e_{ij},\bar e_{ij})=\lambda
\]
in every root direction. Both $P$ and $g$ are torus-invariant Hermitian tensors and are diagonal in the root decomposition, so
$P=\lambda g$. Hence
\[
 \mathcal C_{\alpha,0}
 \equiv
 \alpha\lambda
\]
for this metric.
\end{remark}

\begin{remark}[The one-dimensional case]
For $m=2$, the flag manifold is $\CP^1$ and there is only one scale $x_{12}=x$. Its first Ricci curvature equals its holomorphic
sectional curvature, both being $2/x$. Therefore
\[
 \mix
 \equiv
 \frac{2(\alpha+\beta)}x.
\]
This verifies the sharpness of the dimension restriction, including the case of a zero mixed-curvature constant.
\end{remark}

The choice $\alpha=0$ in Theorem~\ref{thm:main} also excludes constant Chern holomorphic sectional curvature on these invariant
metrics. The theorem concerns the Chern connection and the standard invariant complex structure. It makes no assertion about non-invariant Hermitian metrics on the same manifold. Its extension to another homogeneous space would require the corresponding tangent-direction curvature evaluations and degree bounds to be established for that space.\\

\noindent\textbf{Acknowledgments.} We would like to express our sincere gratitude to Professor Fangyang Zheng for his insightful guidance, valuable discussions, and continued support throughout the course of this research.

\end{document}